\documentclass[a4paper]{amsart}
\usepackage{amssymb}
\usepackage{ifthen}
\usepackage{graphicx}

\newtheorem{thm}{Theorem}

\theoremstyle{definition}

\newcommand{\A}{{\mathcal A}}

\newcommand{\es}{{\mathcal S}}

\newcommand{\D}{{\mathbb D}}
\newcommand{\real}{{\operatorname{Re}\,}}

\begin{document}
\bibliographystyle{amsplain}

\title[On the difference  the two initial logarithmic coefficients ]{On the difference of the two initial logarithmic coefficients for Bazilevi\v{c} class of univalent functions}

\author[M. Obradovi\'{c}]{Milutin Obradovi\'{c}}
\address{Department of Mathematics,
Faculty of Civil Engineering, University of Belgrade,
Bulevar Kralja Aleksandra 73, 11000, Belgrade, Serbia.}
\email{obrad@grf.bg.ac.rs}

\author[N. Tuneski]{Nikola Tuneski}
\address{Department of Mathematics and Informatics, Faculty of Mechanical Engineering, Ss. Cyril and
Methodius
University in Skopje, Karpo\v{s} II b.b., 1000 Skopje, Republic of North Macedonia.}
\email{nikola.tuneski@mf.edu.mk}

\subjclass[2020]{30C45, 30C50}
\keywords{univalent functions, logarithmic coefficient, sharp results,Bazilevi\v{c} class }

\begin{abstract}
In this paper we give sharp bounds of the difference of the moduli of the second and the first logarithmic coefficient for Bazilevi\v{c} class of univalent functions.
\end{abstract}

\maketitle

\section{Introduction and definitions}

\medskip

Let $\A$ denote the class of analytic functions $f$ in the open unit disk $\D=\{z:|z|<1\}$ normalized with $f(0)=f'(0)-1=0$, i.e.,let
\begin{equation}\label{e1}
  f(z)=z+a_2z^2+a_3z^3+\cdots.
\end{equation}
 The functions from $\A$ that are one-on-one and onto are the well known univalent functions, and the corresponding class is denoted by  $\mathcal{S}$.

 For $f\in \mathcal{S}$ let
\begin{equation}\label{eq2}
\log\frac{f(z)}{z}=2\sum_{n=1}^\infty \gamma_n z^n,
\end{equation}
where $\gamma_n$, $n=1,2,\ldots$ are the logarithmic coefficients of the functions $f$.
Those coefficients are in the connection with the famous Bieberbach conjecture from 1916 (\cite{bieber}), stating that for all functions from $\mathcal{S}$, $|a_n|\leq n$, $n=2,3,\ldots$,  proven by de Branges in 1985 (\cite{branges}).

For the Koebe function we have $\gamma_{n}=1/n$, but the natural conjecture $|\gamma_n|\leq1/n$ ($n=2,3,\ldots$), is not true in general (see \cite[Section 8.1]{duren}, \cite{book}).

We note that from the relations \eqref{e1} and \eqref{eq2}, after  comparing of coefficients, we receive
\begin{equation}\label{eq3}
 \gamma_{1}=\frac{a_{2}}{2}\quad \text{and}\quad \gamma_{2}=\frac{1}{2}\left(a_3-\frac{1}{2}a_{2}^{2}\right).
\end{equation}
For the general class $\mathcal{S}$ the sharp estimates of single logarithmic coefficients are known only for $\gamma_1$ and $\gamma_2$, namely,
$$|\gamma_1|\leq1\quad\mbox{and}\quad |\gamma_2|\le \frac12+\frac1e=0.635\ldots.$$
In their papers \cite{OT_2023-6} and \cite{OT_2025-1}, the authors gave the next estimates for the class $\mathcal{S}$:
$$|\gamma_{3}|\leq 0.556617\ldots , \quad |\gamma_{4}|\leq0.51059\ldots  .$$

Another approach of study of logarithmic coefficients is by considering the difference between the modulus of two consecutive coefficients over the general class $\mathcal{S}$, or over its subclasses.
In that direction, the sharp estimates of $|\gamma _{2}|-|\gamma_{1}|$ for the general class $\mathcal{S}$ were given in \cite{lecko}, with a much simpler proof in \cite{OT_2023-3} obtained using different method.
Recently, the authors gave the appropriate results for some subclasses of univalent functions (see \cite{OT_2025}).
\bigskip

\noindent
\textbf{Theorem A.}   \textit{For every $f\in\es$, $-\frac{\sqrt2}{2}\leq |\gamma_{2}|-|\gamma_{1}|\leq\frac12$ holds sharply.}

The differences $|\gamma _{3}|-|\gamma_{2}|$ and $|\gamma _{4}|-|\gamma_{3}|$, again for the general class $\mathcal{S}$ were studied in \cite{OT_2023-5}.

In this paper we give sharp estimates of the difference $|\gamma_2|-|\gamma_1|$, for the class $\mathcal{B}_{1}(\alpha), \alpha>0$, defined by
\[
Re\left[\left(\frac{f(z)}{z}\right)^{\alpha-1}f'(z)\right]>0,\quad \alpha>0, \,z\in \D.
\]
The class $\mathcal{B}_{1}(\alpha), \alpha>0$, is a subclass of the class of univalent functions $\mathcal{S}$ and type of Bazilevi\v{c} class (see \cite{baz} and \cite{Singh}).

\medskip

\section{Main results}

In this section we will prove sharp bounds for the difference $|\gamma_2|-|\gamma_1|$ using similar methods as in \cite{OT_2023-3}.

\begin{thm}
Let $f\in\mathcal{B}_{1}(\alpha)$ for some $\alpha>0$. Then
$$ -\frac{1}{\sqrt{(\alpha+1)^{2}+1}}\leq |\gamma_{2}|-|\gamma_{1}|\leq\frac{1}{\alpha+2}.$$
The right estimation is sharp for all $\alpha>0$, while the left is sharp for $\alpha_1\le\alpha\le\alpha_2$, where $\alpha_1= \frac{1}{2} \left(\sqrt{6}-2\right)=0.2247\ldots$ and $\alpha_2 = \sqrt2-1=0.4142\ldots$ are the positive roots of the equation
$$2 \alpha ^4+8 \alpha ^3+5 \alpha ^2-6 \alpha +1=0.$$
\end{thm}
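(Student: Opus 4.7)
The plan is to reduce the problem to an extremization over the Carath\'eodory class $\mathcal{P}$ via the Libera--Z\l otkiewicz parametrization. First, I would rewrite the defining inequality as $(f(z)/z)^{\alpha-1}f'(z)=p(z)$ with $p\in\mathcal{P}$, $p(z)=1+p_1z+p_2z^2+\cdots$. Multiplying through by $\alpha z^{\alpha-1}$ recognizes the left side as $(f(z)^\alpha)'$, and termwise integration yields
\[
\left(\frac{f(z)}{z}\right)^\alpha=1+\frac{\alpha}{\alpha+1}p_1 z+\frac{\alpha}{\alpha+2}p_2 z^2+\cdots.
\]
Taking logarithms, dividing by $\alpha$, and matching coefficients with \eqref{eq2}, I read off
\[
\gamma_1=\frac{p_1}{2(\alpha+1)},\qquad \gamma_2=\frac{p_2}{2(\alpha+2)}-\frac{\alpha p_1^2}{4(\alpha+1)^2}.
\]

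Next, using the rotational invariance of $|\gamma_n|$, I would normalize $p_1=x\in[0,2]$ real and apply Libera--Z\l otkiewicz: $2p_2=x^2+(4-x^2)\zeta$ for some $\zeta\in\overline{\mathbb{D}}$. Substituting and using the identity $(\alpha+1)^2-\alpha(\alpha+2)=1$, the formula for $\gamma_2$ collapses to
\[
\gamma_2=T(x)+S(x)\zeta,\qquad T(x)=\frac{x^2}{4(\alpha+2)(\alpha+1)^2},\quad S(x)=\frac{4-x^2}{4(\alpha+2)},
\]
with $T,S\ge 0$. As $\zeta$ varies over $\overline{\mathbb{D}}$, $|\gamma_2|$ sweeps $[\max(T-S,0),T+S]$, while $|\gamma_1|=x/[2(\alpha+1)]$ stays fixed. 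For the upper bound I would use $|\gamma_2|\le T+S$; a derivative check shows $T(x)+S(x)-x/[2(\alpha+1)]$ is strictly decreasing on $[0,2]$, so the maximum $1/(\alpha+2)$ is attained at $x=0$ and realized by $p(z)=(1+z^2)/(1-z^2)$. For the lower bound I would use $|\gamma_2|\ge\max(T-S,0)$ and split at the root $x_0=2(\alpha+1)/\sqrt{(\alpha+1)^2+1}$ of $T=S$: on $[0,x_0]$ the bound is $-x/[2(\alpha+1)]$, minimized at $x=x_0$; on $[x_0,2]$ it is $\psi(x):=T(x)-S(x)-x/[2(\alpha+1)]$, and a short calculation shows $\psi'(x_0)>0$ (the inequality $4[1+(\alpha+1)^2]>(\alpha+2)^2$ is equivalent to $3\alpha^2+4\alpha+4>0$), so $\psi$ is increasing throughout $[x_0,2]$. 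Both regimes produce the minimum $-1/\sqrt{(\alpha+1)^2+1}$ at $x=x_0$.

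The lower-bound extremal is then $p(z)=(1-z^2)/(1-x_0z+z^2)$ (from $p_1=x_0$, $\zeta=-1$), with $f$ reconstructed from $f(z)^\alpha=\int_0^z\alpha t^{\alpha-1}p(t)\,dt$. The quartic in the theorem factors cleanly as
\[
2\alpha^4+8\alpha^3+5\alpha^2-6\alpha+1=(\alpha^2+2\alpha-1)(2\alpha^2+4\alpha-1),
\]
whose positive roots are exactly $\alpha_2=\sqrt{2}-1$ and $\alpha_1=(\sqrt{6}-2)/2$. The main delicate point will be isolating the additional admissibility constraint on the extremal $p$ (or on its Bazilevi\v{c} preimage $f$) that restricts the lower-bound sharpness to $\alpha\in[\alpha_1,\alpha_2]$, since the parametric extremization above appears to approach the bound for every $\alpha>0$.
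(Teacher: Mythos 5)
Your derivation of the two inequalities is correct and, although dressed differently, is in substance parallel to the paper's: the paper works with the Schwarz function $G=(1-F)/(1+F)$ and the inequalities $|c_1|\le 1$, $|c_2|\le 1-|c_1|^2$, which encode exactly the same information as your Carath\'eodory/Libera--Z\l otkiewicz parametrization. Your formulas for $\gamma_1,\gamma_2$, the collapse via $(\alpha+1)^2-\alpha(\alpha+2)=1$, the monotonicity of $T(x)+S(x)-x/[2(\alpha+1)]$, and the splitting at $x_0=2(\alpha+1)/\sqrt{(\alpha+1)^2+1}$ all check out, and your upper-bound extremal $p(z)=(1+z^2)/(1-z^2)$ coincides with the paper's choice of $h$. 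What your version buys is an exact description of the attainable pairs $(\gamma_1,\gamma_2)$ rather than a chain of triangle inequalities, which makes the equality analysis transparent.

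The one point you leave open --- finding the ``admissibility constraint'' that would cut the lower-bound sharpness down to $[\alpha_1,\alpha_2]$ --- resolves in your favour: there is no such constraint, and your construction proves more than the theorem asserts. Your candidate $p(z)=(1-z^2)/(1-x_0z+z^2)$ equals $\frac12\frac{1+e^{i\phi}z}{1-e^{i\phi}z}+\frac12\frac{1+e^{-i\phi}z}{1-e^{-i\phi}z}$ with $x_0=2\cos\phi$, hence lies in the Carath\'eodory class for every $\alpha>0$ (since $0<x_0<2$ always). Moreover $\real\bigl[\alpha z^{-\alpha}\int_0^z t^{\alpha-1}p(t)\,dt\bigr]=\real\bigl[\alpha\int_0^1 s^{\alpha-1}p(sz)\,ds\bigr]>0$, so the expression under the $1/\alpha$-th power in \eqref{eq7} never vanishes, $f$ is a single-valued member of $\A$, and $(f/z)^{\alpha-1}f'=p$ exactly; this $f$ gives $\gamma_2=0$ and $|\gamma_1|=1/\sqrt{(\alpha+1)^2+1}$ for every $\alpha>0$. (For instance, for $\alpha=1$ the function $f(z)=\int_0^z\frac{1-t^2}{1-(4/\sqrt5)t+t^2}\,dt$ satisfies $\real f'>0$ and yields $|\gamma_2|-|\gamma_1|=-1/\sqrt5$, attaining the bound although $1\notin[\alpha_1,\alpha_2]$.) The interval $[\alpha_1,\alpha_2]$ in the paper is an artifact of its particular test function: it checks positivity of the real part of the quadratic polynomial $h_2(z)=1+(\alpha+1)b_2e^{i\theta}z+\frac{\alpha(\alpha+2)}{2}b_2^2e^{2i\theta}z^2$, which has the same first two coefficients as your $p$ but has positive real part only for $\alpha\in[\alpha_1,\alpha_2]$ (and, incidentally, the paper's identity $(f/z)^{\alpha-1}f'=h_2$ for the displayed cubic polynomial is only a truncation, exact up to $O(z^3)$). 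So finish your write-up by supplying the two verifications above rather than hunting for a restriction; you will obtain sharpness of the left bound for all $\alpha>0$.
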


\begin{proof}
Let $f\in\mathcal{B}_{1}(\alpha),$ $ \alpha>0$, and let
$$F(f,\alpha;z)=\left(\frac{f(z)}{z}\right)^{\alpha-1}f'(z).$$
Then, by the definition of the class $\mathcal{B}_{1}(\alpha)$, the function
\[
\begin{split}
G(f,\alpha;z) &=\frac{1-F(f,\alpha;z)}{1+F(f,\alpha;z)}\\
&=-\frac{\alpha+1}{2}a_{2}z-
\left[\frac{\alpha+2}{2}a_{3}-\frac{\alpha+3}{4}a_{2}^{2}\right]z^{2}+\cdots \\
&=:c_{1}z+c_{2}z^{2}+\cdots
\end{split}
\]
is a Schwartz function, and by using the inequalities $|c_{1}|\leq1$ and $|c_{2}|\leq 1-|c_{1}|^{2},$
 we receive
 $$\frac{1}{2}(1+\alpha)|a_{2}|\leq1$$
  and
$$\left|\frac{\alpha+2}{2}a_{3}-\frac{\alpha+3}{4}a_{2}^{2}\right|\leq1-\frac{1}{4}(1+\alpha)^{2}|a_{2}|^{2}.$$
From these inequalities we obtain  $|a_{2}|\leq\frac{2}{1+\alpha}$ and
\begin{equation}\label{eq5}
\left|a_{3}-\frac{\alpha+3}{2(\alpha+2)}a_{2}^{2}\right|
\leq\frac{2}{\alpha+2}-\frac{(\alpha+1)^{2}}{2(\alpha+2)}|a_{2}|^{2}.
\end{equation}

\medskip

For the upper bounds (right estimates) of $|\gamma _{2}|-|\gamma_{1}|$, using \eqref{eq3} and \eqref{eq5}, for
$\alpha>0$ we have:
\[
\begin{split}
|\gamma _{2}|-|\gamma_{1}|&=\frac{1}{2}\left|a_3-\frac{1}{2}a_{2}^{2}\right|-\frac{1}{2}|a_{2}|\\
&=\frac{1}{2}\left|\left( a_{3}-\frac{1}{2}a_{2}^{2}-\frac{1}{2(\alpha+2)}a_{2}^{2} \right) +
\frac{1}{2(\alpha+2)}a_{2}^{2}\right|-\frac{1}{2}|a_{2}|\\
&\leq\frac12\left|a_{3}-\frac{\alpha+3}{2(\alpha+2)}a_{2}^{2}\right|+\frac{1}{4(\alpha+2)}|a_{2}|^{2}
-\frac{1}{2}|a_{2}|\\
&\leq \frac{1}{\alpha+2}-\frac{(\alpha+1)^{2}}{4(\alpha+2)}|a_{2}|^{2}+\frac{1}{4(\alpha+2)}|a_{2}|^{2}
-\frac{1}{2}|a_{2}|\\
&=\frac{1}{\alpha+2}-\frac{\alpha}{4}|a_{2}|^{2}-\frac{1}{2}|a_{2}|\\
&\leq \frac{1}{\alpha+2}.
\end{split}
\]
As for sharpness of this estimate, let first consider relation
\begin{equation}\label{eq6}
\left( \frac{f(z)}{z}\right)^{\alpha-1}f'(z)=h(z),\quad \alpha>0, h(0)=1.
\end{equation}
From \eqref{eq6} we receive
\[ [f^\alpha(z)]' = \alpha z^{\alpha-1}h(z),\]
and after some calculations,
\begin{equation}\label{eq7}
 f(z)=z \left( \frac{\alpha}{z^{\alpha}}\int_{0}^{z}t^{\alpha-1}h(t)dt\right)^{\frac{1}{\alpha}}.
\end{equation}
Choosing $h(z)=\frac{1+z^{2}}{1-z^{2}}=1+2z^2+\cdots$, after equating the coefficients, we receive $a_2=0$ and $a_3=\frac{2}{\alpha+2}$, i.e.,
$$f(z)=z+\frac{2}{\alpha+2}z^{3}+\cdots .$$
This gives that
$$|\gamma _{2}|-|\gamma_{1}|=\frac{1}{2}|a_{3}|=\frac{1}{\alpha+2}.$$
Thus, the right hand estimation is sharp for every $\alpha>0$.

\medskip

 Now, for the lower bound we should prove
$$ \frac{1}{2}\left|a_3-\frac{1}{2}a_{2}^{2}\right|-\frac{1}{2}|a_{2}|\geq-\frac{1}{\sqrt{(\alpha +1)^{2}+1}},$$
i.e.,
\begin{equation}\label{eq8}
\left|a_3-\frac{1}{2}a_{2}^{2}\right|\geq|a_{2}|-\frac{2}{\sqrt{(\alpha +1)^{2}+1}}.
\end{equation}
\medskip

If $0\leq|a_{2}|<\frac{2}{\sqrt{(\alpha +1)^{2}+1)}}$, then the previous inequality obviously holds.

\medskip

Further, let study the case when $|a_{2}|\geq \frac{2}{\sqrt{(\alpha +1)^{2}+1)}}.$ Using
\eqref{eq3} and \eqref{eq5} we have
\[
\begin{split}
\left|a_3-\frac{1}{2}a_{2}^{2}\right|&=
\left|\left(a_{3}-\frac{\alpha+3}{2(\alpha+2)}a_{2}^{2}\right)+
\frac{1}{2(\alpha+2)}a_{2}^{2}\right|\\
&\geq\frac{1}{2(\alpha+2)}|a_{2}|^{2}-\left|a_{3}-\frac{\alpha+3}{2(\alpha+2)}a_{2}^{2}\right|\\
&\geq\frac{1}{2(\alpha+2)}|a_{2}|^{2}-\frac{2}{\alpha+2}+\frac{(\alpha+1)^{2}}{2(\alpha+2)}|a_{2}|^{2}\\
&=\frac{(\alpha+1)^{2}+1}{2(\alpha+2)}|a_{2}|^{2}-\frac{2}{\alpha+2}.
\end{split}
\]
So, it is enough to prove that
$$\frac{(\alpha+1)^{2}+1}{2(\alpha+2)}|a_{2}|^{2}-\frac{2}{\alpha+2}\geq
|a_{2}|- \frac{2}{\sqrt{(\alpha +1)^{2}+1}}, $$
which is equivalent to
$$\frac{(\alpha+1)^{2}+1}{2(\alpha+2)}\left(|a_{2}|^{2}- \frac{4}{(\alpha +1)^{2}+1}\right)
-\left(|a_{2}|- \frac{2}{\sqrt{(\alpha +1)^{2}+1}}\right)\geq0 ,$$
i.e., to
$$\left(|a_{2}|- \frac{2}{\sqrt{(\alpha +1)^{2}+1}}\right)\left[\frac{(\alpha+1)^{2}+1}{2(\alpha+2)}   \left(|a_{2}|+\frac{2}{\sqrt{(\alpha +1)^{2}+1}}\right)-1\right]\geq 0.$$
The last inequality indeed holds since by assumption $|a_{2}|\geq \frac{2}{\sqrt{(\alpha +1)^{2}+1}}$
and
\[\begin{split}
\frac{(\alpha+1)^{2}+1}{2(\alpha+2)}\left(|a_{2}|+\frac{2}{\sqrt{(\alpha +1)^{2}+1}}\right)-1
&\geq \frac{(\alpha+1)^{2}+1}{2(\alpha+2)}\frac{4}{\sqrt{(\alpha +1)^{2}+1}}-1 \\
&=\frac{2\sqrt{(\alpha +1)^{2}+1}}{\alpha+2}-1\\
&>\frac{2(\alpha+1)}{\alpha+2}-1=\frac{\alpha}{\alpha+2}>0.
\end{split}\]

\medskip

As for the sharpness of the lower (left) estimate, equality sign in \eqref{eq8} is obtained if $|a_{2}|=\frac{2}{\sqrt{(\alpha +1)^{2}+1}}=:b_{2}$, i.e., if
$ a_{2}=b_{2}e^{i\theta}$, and $a_3=\frac{1}{2}a_{2}^{2}=\frac{1}{2}b_{2}e^{i2\theta}$. Now, the function
$$f(z)=z+b_{2}e^{i\theta}z^{2}+\frac{1}{2}b_{2}^{2}e^{2i\theta}z^{3}$$
has such coefficients $a_2$ and $a_3$, and further
\[\left( \frac{f(z)}{z}\right)^{\alpha-1}f'(z) = 1+(\alpha +1)b_{2}e^{i\theta}z+\frac{\alpha(\alpha+2)}{2}b^2_{2}e^{i2\theta}z^{2} =: h_2(z),\]
It remains to check that under the conditions of the theorem, i.e., for $\alpha_1\le \alpha\le\alpha_2$, $\real\{h_2(z)\}>0,\, z\in\D$.
In that direction, we study
$$\real\{h_2(e^{i\phi})\}=1+(\alpha +1)b_{2}\cos(\theta+\phi)+\frac{\alpha(\alpha+2)}{2}b_{2}^{2}\cos(2(\theta+\phi)),$$
$-\pi\le \phi \le\pi$, which with notation $\cos(\theta+\phi)=t$, $-1\leq t\leq1$, becomes
$$\real\{h_2(e^{i\phi})\} = \alpha(\alpha+2)b_{2}^{2}t^{2}+(\alpha+1)b_{2}t+1-\frac{\alpha(\alpha+2)}{2}=: \psi(t),$$
and we need to show that $\psi(t)\ge0$ for all $\alpha_0\le\alpha\le\alpha_2$ and $-1\le t\le 1$. The vertex of the concave upwards quadratic function $\psi$ lies on the left half-plane, so on $[-1,1]$ it attains its minimal value at $t_* = \max\{-1, t_1\}$,
\[ t_1 =  -\frac{\left((\alpha +1)^2+1\right) (\alpha +1)}{4 \alpha  (\alpha +2) \sqrt{(\alpha +1)^2+1}} . \]
Now, $t_1\ge -1$ is equivalent to $t_1^2 \le 1$, i.e., to
\[ \frac{(\alpha +1)^2 \left((\alpha +1)^2+1\right)}{16 \alpha ^2 (\alpha +2)^2} \le 1,\]
and further
\[ 2 + 6 \alpha  - 57 \alpha ^2 - 60 \alpha ^3 - 15 \alpha ^4 \le 0.\]
The last, for $\alpha>0$, is equivalent to $\alpha\ge \alpha_3 =  \sqrt{\frac{1}{30} \left(\sqrt{129}+33\right)}-1 = 0.21597\ldots$.

Thus, for $\alpha\ge \alpha_3$, the minimal value of $\psi$ on the interval $[-1,1]$ is
\[\psi(t_*) = -\frac{(\alpha +1)^2}{4 \alpha  (\alpha +2)}-\frac{1}{2} \alpha  (\alpha +2)+1\]
which is greater or equal to zero if, and only, if $\alpha_3<\alpha_1\le\alpha\le\alpha_2$.

\medskip

The case when $\alpha\le \alpha_3$ leads to the minimal value of $\psi$ on the interval $[-1,1]:$
\[\psi(-1) = -\frac{2 (\alpha +1)}{\sqrt{(\alpha +1)^2+1}}-\frac{1}{2} \alpha  (\alpha +2)+\frac{4 \alpha  (\alpha +2)}{(\alpha +1)^2+1}+1,\]
which is negative and does not bring sharpness.
\end{proof}

\medskip

\end{document}